\newtheorem{theorem}[equation]{Theorem}
\newtheorem*{theorem*}{Theorem}
\newtheorem*{lemma*}{Lemma}
\newtheorem{proposition}[equation]{Proposition}
\newtheorem*{corollary*}{Corollary}
\theoremstyle{remark}
\newtheorem{remark}[equation]{Remark}
\numberwithin{equation}{section} 
\newcommand{\R}{\mathbb R}
\begin{document}

\author[J.A.~Hoisington]{Joseph Ansel Hoisington}\address{Department of Mathematics and Statistics, Smith College, Northampton, MA 01063 USA}\address{(Current Address: Department of Mathematics, University of Georgia, Athens, GA 30602 USA)}\email{jhoisington@uga.edu} 

\title[Steiner's Formula and the Isoperimetric Inequality]{Steiner's Formula and a Variational Proof of the Isoperimetric Inequality}

\keywords{Isoperimetric inequalities, Steiner's formula}
\subjclass[2010]{Primary 52A38 Length, Area, Volume; Secondary 53C65 Integral Geometry, 52A39 Mixed Volumes and Related Topics, 52A10 Convex Sets in $2$ Dimensions}

\begin{abstract}
We give a new proof of the isoperimetric inequality in the plane, based on Steiner's formula for the area of a convex neighborhood.  This proof establishes the isoperimetric inequality directly, without requiring that we separately establish the existence of an optimal domain.  In doing so, this proof bypasses the main difficulty in all of the proofs Steiner outlined for the plane isoperimetric inequality.  
\end{abstract}

\maketitle



\section{Introduction}
\label{intro} 

The classical isoperimetric inequality states that among all simple closed curves of length $L$ in the plane, the unique curve enclosing the largest area is the circle of circumference $L$: 
\smallskip

\begin{theorem}[The Isoperimetric Inequality] 
\label{ie}

Let $\gamma$ be a simple closed curve in the plane of length $L$, enclosing a domain $D$ of area $A$. 
\medskip 

Then $L^{2} \geq 4\pi A$, with equality precisely if $\gamma$ is a circle.  

\end{theorem}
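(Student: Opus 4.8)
The plan is to deduce the inequality from the behavior of the one-parameter family of parallel sets of $D$, reading off $L^{2} - 4\pi A$ as the discriminant of Steiner's area polynomial and then forcing that discriminant to be nonnegative by letting the family degenerate. First I would reduce to the case that $D$ is convex: replacing $D$ by its convex hull $\widehat{D}$ does not increase the length of the boundary and does not decrease the enclosed area, so $L^{2} - 4\pi A \ge L(\widehat{D})^{2} - 4\pi A(\widehat{D})$. Since a simple closed curve that is not already convex has a strictly shorter convex-hull boundary, it suffices to prove the theorem for convex $D$, and any equality case must be convex.

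Next I would invoke Steiner's formula: for a convex body $K$ with area $A$ and perimeter $L$, the outer parallel set $K + tB$, where $B$ is the closed unit disk, has area
$$\mathcal{A}(t) = A + Lt + \pi t^{2}, \qquad t \ge 0.$$
The key algebraic observation is that the isoperimetric deficit is exactly the discriminant of this quadratic, namely $L^{2} - 4\pi A = \operatorname{disc}\bigl(\pi t^{2} + Lt + A\bigr)$, so that the conclusion $L^{2} \ge 4\pi A$ is equivalent to the assertion that the polynomial $Q(t) = \pi t^{2} - Lt + A$ possesses a real root, i.e. attains a nonpositive value for some real $t$.

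The variational content, and the step I expect to be the main obstacle, is to produce such a value of $t$ from the inner parallel (eroded) sets $K_{-t} = \{x \in K : \operatorname{dist}(x, \partial K) \ge t\}$. These shrink to a point or segment of zero area at $t$ equal to the inradius $\rho$, so $A(K_{-\rho}) = 0$. On the other hand I would establish the reversed Steiner estimate
$$A(K_{-t}) \ge A - Lt + \pi t^{2} = Q(t),$$
obtained from the coarea identity $A(K_{-t}) = A - \int_{0}^{t} L(K_{-s})\,ds$ together with the curvature estimate $L(K_{-s}) \le L - 2\pi s$ (the total turning of a convex curve is $2\pi$, and corners of the erosion only accelerate the loss of length). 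Evaluating at $t = \rho$ then gives $0 = A(K_{-\rho}) \ge Q(\rho)$, so $Q$ attains a nonpositive value at the real point $\rho$ and hence has a real root, which is precisely $L^{2} \ge 4\pi A$.

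The delicate points, which I would treat last, are the rigorous justification of the erosion estimate $L(K_{-s}) \le L - 2\pi s$ for convex bodies that are \emph{not} smooth, the verification of the coarea identity and of the degeneration $A(K_{-\rho}) = 0$ at the inradius, and the equality discussion. For the latter I would trace equality back through the convex-hull reduction, the coarea identity, and the curvature estimate; forcing each of these to be sharp should compel the erosions to remain concentric disks and hence $\gamma$ to be a circle. I expect the honest handling of non-smooth boundaries and the behavior of $K_{-t}$ as $t \to \rho$ to be where the real work lies, since the whole argument hinges on the reversed Steiner estimate holding without any regularity hypothesis on $\gamma$.
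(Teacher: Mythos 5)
Your argument is correct, but it is a genuinely different proof from the one in the paper, even though both revolve around Steiner's formula and both avoid establishing the existence of a minimizer. The paper works with \emph{outer} parallel bodies: it rescales the $r$-neighborhoods of $D$ to constant area, views the resulting family (in the variable $t = 1/r$) as a variation of the disk, computes $I''(0) = (L^{2}-4\pi A)/2\pi^{2}$ from Steiner's formula, and then proves $I''(0)\ge 0$ by a second-variation computation with support functions whose essential input is Wirtinger's inequality. You instead work with \emph{inner} parallel bodies and make the deficit appear as the discriminant of the Steiner polynomial, forcing a real root by showing $Q(\rho)\le 0$ at the inradius. Your route is more elementary (no Fourier analysis or Wirtinger), and it actually proves more: from $\pi\rho^{2}-L\rho+A\le 0$ one gets the Bonnesen-type inequality $L^{2}-4\pi A\ge (L-2\pi\rho)^{2}$, which makes the equality case nearly immediate --- equality forces $L=2\pi\rho$ and $A=\pi\rho^{2}$, so the inball exhausts the area of $D$ and $D$ is that disk; this is cleaner than tracing equality through the coarea identity as you propose. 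The paper's route, by contrast, buys a variational interpretation (the disk as a stable critical point of the isoperimetric ratio) and a direct link to Hurwitz's proof. On your flagged technical points: the estimate $L(K_{-s})\le L-2\pi s$ is best justified not by total turning of a possibly non-smooth curve but by the Minkowski-sum inclusion $K_{-s}+sB\subseteq K$ together with monotonicity of perimeter for nested convex bodies and Steiner's formula B applied to $K_{-s}$, which requires no regularity at all; the coarea identity for the distance-to-the-boundary function and $A(K_{-\rho})=0$ (the set of incenters is convex with empty interior) are standard for convex bodies. So the gaps you anticipate are all closable, and none is a flaw in the logic.
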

\smallskip

This paper gives a proof of the isoperimetric inequality based on Steiner's formula, which describes the area of a neighborhood of a convex domain in $\R^{2}$: 
\smallskip

\begin{theorem}[Steiner's Formula, \cite{St}] 
\label{sf}

Let $D$ be a bounded, convex domain in $\R^{2}$, of area $A$ and perimeter $L$, and let $D_{r}$ be the $r$-neighborhood of $D$, i.e. the points in $\R^{2}$ whose distance from $D$ is $r$ or less.  Then:  
\smallskip
\begin{itemize}
\item[{\bf A.}] $Area(D_{r}) = \pi r^{2} + L r + A$, 
\medskip
\item[{\bf B.}] $Length(\partial D_{r}) = 2\pi r + L$.
\end{itemize}
\end{theorem}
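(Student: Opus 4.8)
The plan is to prove both statements first for convex polygons, where each reduces to an elementary decomposition of the neighborhood into rectangles and circular sectors, and then to recover the general case by approximating an arbitrary convex domain by polygons.

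Suppose first that $D$ is a convex polygon with edges of lengths $\ell_1, \dots, \ell_n$ and exterior angles $\theta_1, \dots, \theta_n$. I would partition $D_r$ into the polygon $D$ itself, together with two families of regions lying in the collar $D_r \setminus D$: for each edge, a rectangle of dimensions $\ell_i \times r$ erected outward along that edge, and for each vertex, a circular sector of radius $r$ subtending the exterior angle $\theta_i$. The rectangles cover exactly the points whose nearest point of $D$ lies in the interior of an edge, while the sectors fill the fan-shaped gaps between consecutive rectangles at each vertex. Summing areas gives $A + \sum_i \ell_i r + \sum_i \tfrac{1}{2} r^2 \theta_i$. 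Since $\sum_i \ell_i = L$ and the exterior angles of a convex polygon satisfy $\sum_i \theta_i = 2\pi$, the sector terms total $\pi r^2$, which is Part \textbf{A}. For Part \textbf{B}, the boundary $\partial D_r$ consists of the outer rectangle edges, which are segments congruent to the $\ell_i$ and hence of total length $L$, together with the sector arcs, of total length $\sum_i r\theta_i = 2\pi r$.

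For a general bounded convex domain, I would choose convex polygons $P_k$ converging to $D$ in the Hausdorff metric, with areas $A_k \to A$ and perimeters $L_k \to L$. The $r$-neighborhood operation is continuous with respect to Hausdorff convergence (it is the Minkowski sum with a fixed disk), so $(P_k)_r \to D_r$, and passing to the limit in the polygonal identities $\mathrm{Area}((P_k)_r) = \pi r^2 + L_k r + A_k$ and $\mathrm{Length}(\partial (P_k)_r) = 2\pi r + L_k$ yields the formulas for $D$.

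The main obstacle I expect is not the polygonal computation but the continuity of the geometric quantities under this limit — in particular the convergence of perimeters, since perimeter is in general only lower semicontinuous, and genuine convergence relies on convexity (for instance via the monotonicity $P \subseteq Q \Rightarrow \mathrm{Length}(\partial P) \le \mathrm{Length}(\partial Q)$ for convex bodies, together with a sandwiching argument). An alternative that sidesteps this is to pass through the support function $h(\theta)$ of $D$: the neighborhood $D_r$ has support function $h(\theta) + r$, and substituting into the classical representations $L = \int_0^{2\pi} h \, d\theta$ and $A = \tfrac{1}{2}\int_0^{2\pi} h\,(h + h'')\, d\theta$ gives both parts by a direct calculation, at the cost of assuming enough smoothness of $\partial D$ for these integral formulas to hold.
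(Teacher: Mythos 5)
Your proposal is correct and follows essentially the same route the paper indicates: establish the formula for convex polygons by decomposing the collar $D_r \setminus D$ into edge rectangles and vertex sectors (whose angles sum to $2\pi$), then pass to general convex domains by polygonal approximation. Your attention to the convergence of perimeters, and the alternative via the support function $h(\theta) + r$ of $D_r$, are both sound and in fact the latter is exactly the device the paper uses later in Proposition \ref{goodvar}.
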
  
\smallskip

Jakob Steiner (March $18^{th}$, 1796 - April $1^{st}$, 1863) proved Theorem \ref{sf} for convex polygons and a similar formula for convex polyhedra in $\R^{3}$.  By polygonal approximation, Theorem \ref{sf} then follows for any compact, convex set in $\R^{2}$, and in fact a version of Theorem \ref{sf} holds in much greater generality -- for more about Steiner's formula, see \cite{Gr,Sc}.  Steiner was fascinated by the isoperimetric inequality, and he sketched several ideas for proving it -- cf. \cite{Tr,Bl}.  The isoperimetric problem was already ancient when Steiner considered it in the nineteenth century, but Theorem \ref{ie} had never been proven rigorously.  It remained unproven in Steiner's lifetime, and all of Steiner's ideas for proving the isoperimetric inequality required the same additional step, which he never provided:  one must show that the isoperimetric problem has a solution. \\  

More precisely, we define the {\bf isoperimetric ratio} of a domain $D$ with area $A$ and perimeter $L$ to be: 

\begin{equation}
\label{ir}
\displaystyle {\Huge \frac{L^{2}}{4\pi A}}.
\end{equation}
\medskip

The isoperimetric ratio is scale-invariant -- we formulate the isoperimetric inequality in terms of $L^{2}$ and $A$, as in Theorem \ref{ie}, because $L^{2}$ and $A$ transform the same under rescalings.  The isoperimetric inequality then states that the isoperimetric ratio of any plane domain is greater than or equal to $1$, with equality precisely for disks.  Steiner developed many proofs that no domain other than a disk could minimize the isoperimetric ratio, but he didn't establish the existence of a domain that minimizes (\ref{ir}). \\ 

The first proof of the existence of a domain minimizing the isoperimetric ratio seems to have been in unpublished lecture notes of Weierstrass in 1879, cf. \cite{Bl}.  The existence of an optimal isoperimetric domain in the plane is now known to be a consequence of several compactness theorems in metric geometry and geometric measure theory, however the proof below does not require that we establish the existence of a minimizer for the isoperimetric ratio -- we show directly that no domain can have an isoperimetric ratio less than $1$.  We believe part of the significance of our proof is that it shows how one of Steiner's ideas from convex geometry can be used to prove the isoperimetric inequality without separately establishing the existence of an optimal domain. \\ 

The basic observation for our proof is the following:  if $D$ is a bounded convex domain in $\R^{2}$, we can use Theorem \ref{sf} to calculate the isoperimetric ratio $\mathcal{I}(r)$ of the $r$-neighborhood of $D$ as a function of $r$.  Letting $A$ be the area of $D$ and $L$ its perimeter, we have: 

\begin{equation}
\label{sfir}
\displaystyle \mathcal{I}(r) = \frac{\left( 2\pi r + L \right)^{2}}{4\pi \left( \pi r^{2} + Lr + A \right)} = \frac{4\pi^{2} r^{2} + 4\pi L r + L^{2}}{4\pi^{2} r^{2} + 4\pi L r + 4\pi A} . 
\end{equation}
\medskip

Differentiating with respect to $r$, we have: 

\begin{equation}
\label{sfird}
\displaystyle \mathcal{I}'(r) = \frac{\left( 4\pi A - L^{2} \right) \left( 8\pi^{2} r + 4\pi L \right)}{\left( 4\pi^{2} r^{2} + 4\pi L r + 4\pi A \right)^{2}} = \frac{\left( 4\pi A - L^{2} \right) \left( \pi r + L \right)}{4\pi \left( \pi r^{2} + L r + A \right)^{2}}. 
\end{equation}
\medskip  

This implies that $\mathcal{I}(r)$ is a monotone function of $r$, decreasing if the isoperimetric ratio of $D$ is greater than $1$ and constant if the isoperimetric ratio of $D$ is equal to $1$.  If $D$ were a convex domain with an isoperimetric ratio less than $1$, $\mathcal{I}(r)$ would increase monotonically to $1$, the isoperimetric ratio of the disk, as $r$ goes to infinity.  As $r$ goes to infinity, the $r$-neighborhoods of any convex domain $D$, when rescaled to have constant area, converge to a disk -- see Proposition \ref{goodvar}.  We will see that this gives a variation of the disk, as an argument for the functional on plane domains given by the isoperimetric ratio.  We will use Steiner's formula to find its first and second variations -- in particular, we will relate them to the isoperimetric ratio of the domain $D$ in question.  We will then be able to deduce Theorem \ref{ie} from the fact that the disk is a critical point,  with non-negative second variation, for the isoperimetric ratio on plane domains.  For later reference, the quantity $L^{2} - 4\pi A$ whose negative appears in (\ref{sfird}) is called the {\bf isoperimetric deficit} of a domain. \\ 

It will be important in our proof that, in the plane, the convex hull $conv(D)$ of a non-convex domain $D$ always has a smaller isoperimetric ratio than $D$ itself: $conv(D)$ encloses a larger area than $D$ with a smaller perimeter.  Therefore, to prove Theorem \ref{ie}, it is enough to show that the isoperimetric inequality holds for convex domains.  Steiner was aware of this fact and used it in several of his ideas for proving the isoperimetric inequality.  In dimensions greater than $2$, this is no longer true: the isoperimetric ratio of a $3$-dimensional domain with volume $V$ and surface area $A$ is defined to be $\frac{A^{3}}{36 \pi V^{2}}$.  Like (\ref{ir}) for plane domains, the isoperimetric ratio of a domain in $\R^{3}$ is scale-invariant and the ball has isoperimetric ratio equal to $1$.  The isoperimetric inequality in $\R^{3}$ states that the isoperimetric ratio of any domain is greater than or equal to $1$, with the ball being the unique minimizer.  For a ball with a long spike in $\R^{3}$, both the volume and surface area, and thus the isoperimetric ratio, can be made arbitrarily close to that of the ball by making the spike narrow enough.  On the other hand, the convex hull of such a domain will be approximately a cone with a hemispherical cap, with an isoperimetric ratio significantly greater than $1$:  for a spike of length $\eta$ on the unit ball, the isoperimetric ratio of its convex hull will be approximately $\frac{\eta + 3}{4}$ for $\eta$ very large. \\ 

The outline of this paper and our proof of the isoperimetric inequality is as follows: \\ 

In Section \ref{variations}, we will calculate the first and second variations of the isoperimetric ratio of the disk.  We will show that the disk is a stable critical point of the isoperimetric ratio and that any variation has positive second variation unless, to first order, the variation is the sum of a translation and a rescaling of the disk. \\ 

In Section \ref{monotonicity}, we will use the $r$-neighborhoods of a compact, convex domain $D$ in the plane to construct a variation of the disk of the type analyzed in Section \ref{variations}.  We will use Steiner's formula to relate its first and second variations to the isoperimetric deficit of $D$, and in doing so, we will show that the isoperimetric deficit of $D$ is non-negative. \\ 

Once we know that the isoperimetric inequality $L^{2} - 4\pi A \geq 0$ holds, any of Steiner's arguments then prove that the disk is the only domain for which equality holds.  However, we will show in Section \ref{uniqueness} that the uniqueness of the disk as a minimizing domain also follows from our proof. \\  

We will prove that the perimeter $L$ and area $A$ of a plane domain $D$ satisfy $L^{2} \geq 4\pi A$ under the assumption that its boundary $\partial D$ is smooth, and we will make the further simplifying assumption that the curvature of $\partial D$ is strictly positive -- that is, the curvature vector of $\partial D$ always points into $D$ and never vanishes.  However by approximation (and the reduction to the convex case) this inequality then follows immediately for any plane domain with a rectifiable boundary.  The corresponding issue is more difficult in higher dimensions -- this is discussed in Section 2 of \cite{Os}.  In all dimensions, however, the boundary of a compact, convex domain can be realized as the Lipschitz image of a round sphere and is therefore rectifiable. \\ 

Throughout the paper, we will discuss the relationship between this proof and other known proofs of the isoperimetric inequality.  Robert Osserman's article \cite{Os} gives an overview of the isoperimetric inequality, its generalizations and their significance in mathematics.  Isaac Chavel's \cite{Ch} and Luis Santal\'o's \cite{San} books both discuss many results and questions in geometry and analysis which are based on the isoperimetric inequality and give several proofs of the classical isoperimetric inequality.  Bl\r{a}sj\"o discusses the history of the isoperimetric inequality in \cite{Bl}, and Howards, Hutchings and Morgan in \cite{HHM} and Andrejs Treibergs in \cite{Tr} present several proofs of the classical isoperimetric inequality. \\  

{\bf Acknowledgments:} I am very happy to thank Christopher Croke, Joseph H.G. Fu and Peter McGrath for their feedback about this work and Isaac Chavel, Frank Morgan and Franz Schuster for their input about the history of the isoperimetric inequality.  


\section{The First and Second Variations of the Isoperimetric Ratio}
\label{variations}

We will calculate the first and second variations of the isoperimetric ratio of the disk for variations through families of convex domains -- in particular, we will see that the disk is a critical point of the isoperimetric ratio and, infinitesimally, a minimizer. \\  

A compact, convex domain $D$ can be described by its {\bf support function} $p(\theta) : S^{1} \rightarrow \R$, defined as follows: 

\begin{align*}
\displaystyle p(\theta) = \text{max} \left(\lbrace h_{\theta}(x) := x_0 \cos(\theta) + x_1 \sin(\theta) \ | \ x = (x_0, x_1) \in D \rbrace \right).
\end{align*}
\smallskip 

If the boundary $\partial D$ of $D$ is smooth and has strictly positive curvature, then $p(\theta) + p''(\theta)$ is its radius of curvature.  In this case, the area $A$ and perimeter $l$ of $D$ are given by:  

\begin{equation}
\label{sup_area}
\displaystyle A = (\frac{1}{2})\int\limits_{0}^{2\pi} p(\theta) \left( p(\theta) + p''(\theta) \right) d\theta = (\frac{1}{2})\int\limits_{0}^{2\pi} p(\theta)^{2} - p'(\theta)^{2}  d\theta, 
\end{equation}
\begin{equation}
\label{sup_perim}
\displaystyle l = \int\limits_{0}^{2\pi} p(\theta)d\theta.  
\end{equation}
\smallskip 

This is described in Chapter 1 of \cite{San}.  A variation of the unit disk $\mathcal{D}_{0}$ through a family of such domains $\lbrace \mathcal{D}_{t} \rbrace_{t \geq 0}$ can therefore be described by a smooth function $p(\theta, t)$, with $p(\theta, t)$ the support function of the domain $\mathcal{D}_{t}$.  In particular, $p(\theta, 0) \equiv 1$. 

\begin{proposition}
\label{12var}

Let $\mathcal{D}_{t}$ be a family of compact, convex domains in the plane, with the boundary $\partial \mathcal{D}_{t}$ of each domain smooth and with positive curvature, which give a variation of the disk $\mathcal{D}_{0}$ as above.  Let $I(t)$ be the isoperimetric ratio of the domain $\mathcal{D}_{t}$. 
\medskip

Then $I'(0) = 0$ and $I''(0) \geq 0$, with equality if and only if, to first order, the family of domains coincides with a rescaling and translation of the disk.

\end{proposition}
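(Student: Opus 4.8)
The plan is to expand every quantity to second order in $t$ using the support-function formulas (\ref{sup_area}) and (\ref{sup_perim}), and then to reduce the sign of $I''(0)$ to a Wirtinger inequality. First I would write the variation of the support function as
\[
p(\theta,t) = 1 + t\,u(\theta) + \tfrac{t^2}{2}\,v(\theta) + O(t^3),
\]
where $u(\theta)$ and $v(\theta)$ are the first and second $t$-derivatives of $p(\theta,t)$ at $t=0$. Substituting into (\ref{sup_perim}) gives the perimeter $L(t)$ of $\mathcal{D}_t$, and substituting into (\ref{sup_area}) gives its area $A(t)$. Expanding both to order $t^2$, I would record
\[
L(0)=2\pi,\qquad L'(0) = \int_0^{2\pi} u\,d\theta,\qquad L''(0) = \int_0^{2\pi} v\,d\theta,
\]
\[
A(0)=\pi,\qquad A'(0) = \int_0^{2\pi} u\,d\theta,\qquad A''(0) = \int_0^{2\pi}\big(u^2 + v - (u')^2\big)\,d\theta.
\]
The two structural facts I would highlight are that $I(0)=1$ and that $L'(0)=A'(0)$; I write $\mu := \int_0^{2\pi} u\,d\theta$ for this common value.

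Next I would differentiate $I(t) = L(t)^2/\big(4\pi A(t)\big)$ logarithmically, using $\log I = 2\log L - \log A - \log(4\pi)$. Since $I(0)=1$, the identity $I'/I = 2L'/L - A'/A$ gives $I'(0) = \mu/\pi - \mu/\pi = 0$, so the disk is a critical point. Differentiating once more and evaluating at $t=0$ (where the vanishing of $I'(0)$ simplifies the quotient rule), the contributions of the second-order term $v$ cancel between $L''(0)$ and $A''(0)$ — as they must at a critical point — leaving an expression in $u$ alone:
\[
I''(0) = \frac{\mu^2}{2\pi^2} + \frac{1}{\pi}\int_0^{2\pi}\big((u')^2 - u^2\big)\,d\theta.
\]

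Finally I would expand $u$ in a Fourier series $u(\theta) = a_0 + \sum_{n\ge 1}(a_n\cos n\theta + b_n\sin n\theta)$, observe that $\mu = 2\pi a_0$, and use orthogonality to evaluate the three integrals. The $a_0$ terms cancel identically and the expression collapses to
\[
I''(0) = \sum_{n\ge 2}(n^2-1)\,(a_n^2 + b_n^2),
\]
which is manifestly nonnegative; this is Wirtinger's inequality in disguise. Equality holds precisely when $a_n = b_n = 0$ for all $n\ge 2$, that is, when $u(\theta) = a_0 + a_1\cos\theta + b_1\sin\theta$. To finish the equality case I would invoke the geometric dictionary for support functions: the constant mode $a_0$ is the first-order support function of an infinitesimal rescaling of $\mathcal{D}_0$, while $a_1\cos\theta + b_1\sin\theta$ is exactly the first-order change in the support function when the disk is translated by $(a_1,b_1)$. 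Hence $I''(0)=0$ if and only if, to first order, the family is a rescaling and translation of the disk, as claimed.

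The main obstacle I anticipate is the bookkeeping that makes the second-order term $v$ drop out of $I''(0)$, together with the reduction of the resulting quadratic form in $u$ to the Wirtinger inequality; the only genuinely geometric input is the identification of the constant and first-harmonic Fourier modes with rescalings and translations, which is what pins down the equality case.
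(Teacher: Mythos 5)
Your proposal is correct and follows essentially the same route as the paper: parametrize the variation by the support function, expand the area and perimeter integrals (\ref{sup_area})--(\ref{sup_perim}) to second order in $t$, observe that $A'(0)=l'(0)$ forces $I'(0)=0$ and that the second-order term $v$ cancels from $I''(0)$, and reduce the sign of $I''(0)$ to Wirtinger's inequality, with the constant and first-harmonic modes of $u$ accounting for rescaling and translation in the equality case. The only differences are presentational: you derive the Wirtinger step by explicit Fourier expansion and use logarithmic differentiation for the first variation, whereas the paper invokes Wirtinger's inequality directly and uses the quotient rule.
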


\begin{proof} Let $p(\theta,t)$ be the support function of $\mathcal{D}_{t}$ as above.  Then letting $A(t)$ be the area and $l(t)$ the perimeter of $\mathcal{D}_{t}$, by (\ref{sup_area}) and (\ref{sup_perim}) we have: 
	
\begin{equation}
\label{var_area}
\displaystyle A(t) = (\frac{1}{2})\int\limits_{0}^{2\pi} p(\theta, t)^{2} - \frac{\partial p}{\partial \theta} (\theta, t)^{2}  d\theta, 
\end{equation}
\begin{equation}
\label{var_perim}
\displaystyle l(t) = \int\limits_{0}^{2\pi} p(\theta, t) d\theta. 
\end{equation}
\smallskip 

Because $p(\theta, 0) \equiv 1$ and $\frac{\partial p}{\partial \theta}(\theta, 0) \equiv 0$, $A'(0)$ and $l'(0)$ are both equal to $\int_{0}^{2\pi} \frac{\partial p}{\partial t}(\theta, 0) d\theta$. \\  

We then have that $I'(0) = \frac{2A(0)  l(0)  l'(0) - A'(0) l(0)^{2}}{4 \pi A(0)^{2}} $ is equal to:  

\begin{align*}
\displaystyle \frac{2 \times \pi \times 2\pi \left( \int\limits_{0}^{2\pi} \frac{\partial p}{\partial t}(\theta, 0) d\theta \right) - 2\pi \times 2\pi \left( \int\limits_{0}^{2\pi} \frac{\partial p}{\partial t}(\theta, 0) d\theta\right)}{4\pi^{3}} = 0. 
\end{align*} 
\smallskip  

$l''(0)$ is equal to $\int_{0}^{2\pi} \frac{\partial^{2} p}{\partial t^{2}}(\theta, 0) d\theta$ and, using again that $p(\theta, 0) \equiv 1$ and $\frac{\partial p}{\partial \theta}(\theta, 0) \equiv 0$, we have:  

\begin{equation}
\label{area_second_derivative}
\displaystyle A''(0) = \int\limits_{0}^{2\pi} \left[ \frac{\partial p}{\partial t}(\theta, 0)^{2} + \frac{\partial^{2} p}{\partial t^{2}}(\theta, 0) - \frac{\partial^{2} p}{\partial t \partial \theta}(\theta, 0)^{2} \right] d\theta.   
\end{equation}
\smallskip 

We then have that $I''(0) = \frac{ \left(2 A'(0) - l'(0) \right)^{2} + 2\pi \left(l''(0) - A''(0) \right)}{2 \pi^{2}}$ is equal to: 

\begin{equation}
\label{wi}
\displaystyle \frac{\left(\int\limits_{0}^{2\pi} \frac{\partial p}{\partial t}(\theta, 0) d\theta \right)^{2} + 2\pi \left(\int\limits_{0}^{2\pi} \frac{\partial^{2} p}{\partial \theta \partial t}(\theta, 0)^{2} - \frac{\partial p}{\partial t}(\theta, 0)^{2} d\theta \right)}{2 \pi^{2}}.  
\end{equation}
\smallskip 

{\bf Wirtinger's inequality} states that if $\varphi(\theta)$ is a $2\pi$-periodic, continuously differentiable function with $\int_{0}^{2\pi} \varphi(\theta) d\theta = 0$, then: 

\begin{align*}
\displaystyle \text{\Large $\int\limits_{0}^{2\pi}$}\varphi'(\theta)^{2} d\theta \geq \text{\Large $\int\limits_{0}^{2\pi}$} \varphi(\theta)^{2} d\theta. 
\end{align*}
\smallskip

Equality holds precisely if $\varphi(\theta) = a_{0}\cos(\theta) + a_{1}\sin(\theta)$ for some constants $a_{0}, a_{1}$.  Wirtinger's inequality thus implies by (\ref{wi}) that $I''(0) \geq 0$ and is strictly positive unless $\frac{\partial p}{\partial t}(\theta, 0) = a_{0}\cos(\theta) + a_{1}\sin(\theta) + 2\pi \widehat{p}$, where $\widehat{p} = \frac{1}{2\pi}\int_{0}^{2\pi}\frac{\partial p}{\partial t}(\theta, 0) d\theta$.  The variation corresponding to $a_{0}\cos(\theta) + a_{1}\sin(\theta)$ gives a translation of the disk, in the direction whose argument is $\arctan(\frac{a_{1}}{a_{0}})$ at speed $\sqrt{a_{0}^{2} + a_{1}^{2}}$, and the variation corresponding to $2\pi \widehat{p}$ rescales the disk, by a factor $1 + t_{0} 2\pi \widehat{p}$ when $t = t_{0}$. \end{proof} 

Wirtinger's inequality can be proved by comparing the Fourier series of a $2\pi$-periodic function with that of its derivative, cf. \cite{Fo}.  Wirtinger's inequality also implies the isoperimetric inequality directly.  This was discovered by Hurwitz, who gave the first proof of the isoperimetric inequality based on Fourier analysis and Wirtinger's inequality in \cite{Hu}.  A variant of this proof, in which the role of Wirtinger's inequality is made explicit, can be found in \cite{Os} and \cite{BG}.  As with our proof, Hurwitz's proof of the isoperimetric inequality does not require that one separately establish the existence of a minimizing domain -- his argument shows directly that $l^{2} \geq 4\pi A$ for any plane domain, with equality precisely when the domain is a disk. 


\section{Steiner's Formula and the Monotonicity of the Isoperimetric Ratio}
\label{monotonicity}

To prove Theorem \ref{ie}, we begin by confirming that the $r$-neighborhoods of a bounded, convex domain $D$, when rescaled to have constant area, give a variation of the disk of the type considered in Proposition \ref{12var}:  

\begin{proposition}  
\label{goodvar}

Let $D$ be a compact, convex domain in the plane whose boundary is smooth and has positive curvature.  For $t > 0$, let $\mathcal{D}_{t}$ be the $r = \frac{1}{t}$-neighborhood of $D$, rescaled to have the same area as $D$, and let $\mathcal{D}_{0}$ be a disk with the same area as $D$. 
\medskip

Then $\lbrace \mathcal{D}_{t} \rbrace_{t \geq 0}$ gives a variation of the disk $\mathcal{D}_{0}$, as in Proposition \ref{12var}.  More precisely, if $q(\theta)$ is the support function of $D$, this variation is described by:  

\begin{equation}
\label{var_function}
\displaystyle p(\theta, t) = \text{\footnotesize $\sqrt{\frac{A}{A t^{2} + lt + \pi}}$} \left(q(\theta) t + 1 \right), 
\end{equation} 
\smallskip 

where $A$ is the area and $l$ is the perimeter of $D$.  

\end{proposition}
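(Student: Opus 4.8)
The plan is to compute the support function of the rescaled neighborhood explicitly, confirm that it agrees with (\ref{var_function}), and then verify that the resulting family satisfies the hypotheses of Proposition \ref{12var}. The central tool is the behavior of the support function under the two operations involved: forming an $r$-neighborhood and rescaling. The $r$-neighborhood of $D$ is the Minkowski sum $D + \overline{B_r}$ of $D$ with the closed disk $\overline{B_r}$ of radius $r$; since support functions are additive under Minkowski sums while $\overline{B_r}$ has constant support function $r$, the support function of the $r$-neighborhood is simply $q(\theta) + r$. Rescaling a domain by a factor $\lambda$ multiplies its support function by $\lambda$.

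First I would determine the rescaling factor. By Steiner's formula (Theorem \ref{sf}), the $r$-neighborhood has area $\pi r^2 + lr + A$, so to restore the area to $A$ one rescales by $\lambda(r) = \sqrt{A/(\pi r^2 + lr + A)}$. The support function of $\mathcal{D}_t$ is therefore $\lambda(r)(q(\theta) + r)$. Substituting $r = 1/t$ and using that $\sqrt{At^2/(At^2 + lt + \pi)} = t\sqrt{A/(At^2 + lt + \pi)}$ for $t > 0$, together with $q(\theta) + 1/t = (q(\theta)t + 1)/t$, the factors of $t$ cancel and this simplifies directly to the stated expression $\sqrt{A/(At^2 + lt + \pi)}\,(q(\theta)t + 1)$, which is (\ref{var_function}).

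It then remains to verify that $\{\mathcal{D}_t\}_{t \geq 0}$ is a variation of the disk of the kind required by Proposition \ref{12var}. Smoothness in $(\theta, t)$ up to and including $t = 0$ follows because the prefactor is smooth for $t \geq 0$ (its denominator equals $\pi > 0$ at $t = 0$ and is positive throughout) and $q$ is smooth, as $\partial D$ is smooth with positive curvature. Evaluating at $t = 0$ gives $p(\theta, 0) \equiv \sqrt{A/\pi}$, the constant support function of the disk of area $A$, so the family does begin at $\mathcal{D}_0$. For convexity and positive curvature I would compute the radius of curvature $p + \partial_{\theta}^2 p = \sqrt{A/(At^2 + lt + \pi)}\,[(q + q'')t + 1]$, which is positive for every $t \geq 0$ because $q + q'' > 0$ (the positivity of the curvature of $\partial D$) and $t \geq 0$. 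Hence each $\mathcal{D}_t$ is a smooth convex domain with strictly positive curvature.

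The algebra here is routine; the one point that genuinely requires care is that the reparametrization $r = 1/t$, combined with the area-normalizing rescaling, converts the $r \to \infty$ behavior of the neighborhoods into a smooth point at $t = 0$ whose limiting domain is exactly a disk. I expect this step, confirming that smoothness extends to $t = 0$ and that the base domain is the disk, to be the main conceptual content, the rest being direct substitution. If one insists on the normalization $p(\theta, 0) \equiv 1$ used in Proposition \ref{12var}, an overall rescaling by $\sqrt{\pi/A}$ achieves it without changing the scale-invariant isoperimetric ratio.
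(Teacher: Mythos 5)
Your proposal is correct and follows essentially the same route as the paper's proof: additivity of support functions under Minkowski sums plus Steiner's formula to identify $p(\theta,t)$, positivity of $p+\partial_\theta^2 p$ from $q+q''>0$, and smooth extension to $t=0$ with the disk as the limiting domain. The only cosmetic difference is that the paper normalizes $A=\pi$ at the outset rather than carrying the general $A$ and rescaling at the end, as you note.
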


\begin{proof} 
	
Let $D$ be as above -- without loss of generality, suppose $D$ has area $\pi$.  Note first that each $r$-neighborhood of $D$ is also convex, cf. Remark \ref{ms} below, so that the variation in question is through a family of convex sets.  If $q(\theta)$ is the support function of $D$, then $q(\theta) + r$ is the support function of $D_{r}$ and, by Theorem \ref{sf}, $\scriptstyle \sqrt{\frac{\pi}{\pi r^{2} + lr + \pi}}$$(q(\theta) + r)$ is the support function of the rescaling of $D_{r}$ whose area is equal to that of $D$.  Rewriting this in terms of $t = \frac{1}{r}$ for $r > 0$, we have:  

\begin{equation}
\label{good_var_eqn}
\displaystyle p(\theta, t) = \text{\footnotesize $\sqrt{\frac{\pi}{\pi (\frac{1}{t})^{2} + l\frac{1}{t} + \pi}}$} \left( q(\theta) + \frac{1}{t} \right) = \text{\footnotesize $\sqrt{\frac{\pi}{\pi t^{2} + lt + \pi}}$} \left(q(\theta) t + 1 \right).  
\end{equation}
\smallskip 

We then have:  

\begin{equation*}
\displaystyle p(\theta,t) + \frac{\partial^{2} p}{\partial \theta^{2}}(\theta,t) = \text{\footnotesize $\sqrt{\frac{\pi}{\pi t^{2} + lt + \pi}}$} \left(t(q(\theta) + q''(\theta)) + 1 \right).    
\end{equation*}
\smallskip 

Since the curvature of $\partial D$ is positive, $q(\theta) + q''(\theta) > 0$, so for all $t > 0$ we also have that $p(\theta,t) + \frac{\partial^{2} p}{\partial \theta^{2}}(\theta,t) > 0$, and that $\partial \mathcal{D}_{t}$ has positive curvature.  $p(\theta,t)$ extends smoothly to $t=0$, where it is equal to the support function of the unit disk, and gives a variation of the disk as in Proposition \ref{12var}. \end{proof} 

\begin{remark}
\label{ms}

The $r$-neighborhood $D_{r}$ of a compact, convex set $D$ is the {\bf Minkowski sum} of $D$ with a disk of radius $r$ in $\R^{2}$.  Minkowski summation of convex sets is discussed extensively in \cite{Sc} and many other texts on convex and integral geometry. 
\end{remark}

We now prove the inequality in Theorem \ref{ie} -- that for a compact domain in $\R^{2}$ with perimeter $l$ and area $A$, $l^{2} \geq 4\pi A$.  We will then address the characterization of the equality case in Section \ref{uniqueness}.  

\begin{proof}[Proof of Theorem \ref{ie}, Part 1]
	
Let $D$ be a compact, convex domain in the plane with area $A$ and boundary length $l$, and suppose $\partial D$ is smooth and has positive curvature as above.  By (\ref{sfir}), for $t > 0$, the isoperimetric ratio $I(t)$ of the $(\frac{1}{t})$-neighborhood of $D$ is:  

\begin{equation}
\label{sfir2}
\displaystyle I(t) = \frac{l^{2}t^{2} + 4\pi l t + 4\pi^{2}}{4\pi A t^{2} + 4\pi l t + 4\pi^{2}}.    
\end{equation}
\smallskip  

Letting $\delta$ be the least absolute value of the roots of $f(t) = 4\pi A t^{2} + 4\pi l t + 4\pi^{2}$, the denominator of (\ref{sfir2}) (see Remark \ref{roots} below), the function of $t$ defined by (\ref{sfir2}) extends smoothly to $(-\delta, \infty)$.  In particular, (\ref{sfir2}) extends smoothly to $t = 0$ to give the isoperimetric ratio of the variation $\lbrace \mathcal{D}_{t} \rbrace_{t \geq 0}$ of the disk described in Proposition \ref{goodvar}.  $I(t)$ is a monotone function of $t \geq 0$, with the sign of $I'(t)$ determined by the isoperimetric deficit of $D$: 

\begin{equation}
\label{sfird2}
\displaystyle I'(t) = \frac{\left( l^{2} - 4\pi A \right)\left( lt^{2} + 2\pi t \right)}{4\pi\left(At^{2} + lt + \pi\right)^{2}}.  
\end{equation}
\smallskip 

Therefore, $I'(0) = 0$ (which also follows from Propositions \ref{12var} and \ref{goodvar}) and for $t > 0$, $I'(t)$ has the same sign as the isoperimetric deficit of $D$.  To show that $l^{2} \geq 4\pi A$, we calculate the second derivative of $I(t)$:  

\begin{equation}
\displaystyle I''(t) = \left( \frac{l^{2} - 4\pi A}{2\pi} \right) \left( \frac{\pi^{2} - 3\pi A t^{2} - Alt^{3}}{(At^{2} + lt + \pi)^{3}} \right).
\end{equation}
\smallskip  

In particular, $\displaystyle I''(0) = \frac{l^{2} - 4\pi A}{2\pi^{2}}$.  The sign of $l^{2} - 4\pi A$ is the same as that of $I''(0)$, which by Proposition \ref{12var} is greater than or equal to $0$. \end{proof}
	
\begin{remark}
\label{roots}

The roots of the denominator of (\ref{sfir2}), $f(t) = 4\pi A t^{2} + 4\pi l t + 4\pi^{2}$, are: 

\begin{equation}
\displaystyle \frac{-l \pm \sqrt{l^{2} - 4\pi A}}{2A}.
\end{equation}
\smallskip 

The isoperimetric inequality is equivalent to the statement that the roots of this polynomial are real, and thus negative, and are distinct unless the domain in question is a disk.  For our purposes, it is enough simply to note that any real roots of $f(t)$ are negative since $f(t) \geq 4\pi^{2}$ when $t \geq 0$.  The roots of the Steiner polynomial were studied by Green and Osher in \cite{GO} (the Steiner polynomial of a domain with area $A$ and perimeter $l$ is $\pi r^{2} + lr + A$, with roots $\frac{-l \pm \sqrt{l^{2} - 4\pi A}}{2\pi}$).  They note that Steiner's formula implies the isoperimetric deficit of the $r$-neighborhood of $D$ is equal to that of $D$.    
\end{remark}


\section{The Uniqueness of the Disk}
\label{uniqueness} 

Once we have shown that $l^{2} \geq 4\pi A$ for all plane domains with perimeter $l$ and area $A$, and thus that the disk minimizes the isoperimetric ratio, any of Steiner's arguments then show that it is the unique minimizer.  The uniqueness of the disk as a minimizing domain for the isoperimetric ratio also follows from our argument, subject to some mild technical assumptions: 

\begin{proof}[Proof of Theorem \ref{ie}, Part 2] Let $D$ be a bounded domain in the plane with smooth (or $C^{2}$) boundary whose area $A$ and boundary length $l$ satisfy $l^{2} = 4\pi A$.  We can suppose $A = \pi$ and $l = 2\pi$.  Suppose in addition that the curvature of $\partial D$ is positive, as above.  By (\ref{sfir2}), in the variation $\lbrace \mathcal{D}_{t} \rbrace_{t \geq 0}$ of the disk constructed from $D$ as in Section \ref{monotonicity}, the isoperimetric ratio of $\mathcal{D}_{t}$ is equal to $1$ for all $t \geq 0$, and therefore $l(t) \equiv 2\pi$.  Therefore, 
	
\begin{equation}
\label{l_deriv}
\displaystyle l'(t) = \int\limits_{0}^{2\pi} \frac{\partial p}{\partial t}(\theta,t) d\theta \equiv 0, 
\end{equation}
\begin{equation}
\label{l_second_deriv}
\displaystyle l''(t) = \int\limits_{0}^{2\pi} \frac{\partial^{2} p}{\partial t^{2}}(\theta,t) d\theta \equiv 0.  
\end{equation}
\smallskip 

By (\ref{area_second_derivative}) and (\ref{l_second_deriv}), we then have:   

\begin{equation}
\label{a_second_deriv}
\displaystyle \int\limits_{0}^{2\pi} \left[ \frac{\partial p}{\partial t}(\theta, t)^{2} - \frac{\partial^{2} p}{\partial t \partial \theta}(\theta, t)^{2} \right] d\theta = A''(t) \equiv 0. 
\end{equation}
\smallskip 

By (\ref{l_deriv}), (\ref{a_second_deriv}) and Wirtinger's inequality, $\frac{\partial p}{\partial t}(\theta, t) = c_{0}(t) \cos(\theta) + c_{1}(t) \sin(\theta)$ for some functions $c_{0}(t), c_{1}(t)$ of $t$.  Letting $q(\theta)$ be the support function of $D$, by (\ref{var_function}), 

\begin{equation}
\displaystyle \frac{q(\theta) - 1}{(t + 1)^{2}} = c_{0}(t) \cos(\theta) + c_{1}(t) \sin(\theta).  
\end{equation}
\smallskip 

This then implies that $c_{0}(t) = \frac{d_{0}}{(t + 1)^{2}}$, $c_{1}(t) = \frac{d_{1}}{(t + 1)^{2}}$ for some constants $d_{0}, d_{1}$, and that $q(\theta) = d_{0} \cos(\theta) + d_{1} \sin(\theta) + 1$.  $D$ is therefore the unit disk centered at $(d_{0}, d_{1})$. \end{proof} 

We conclude with a few remarks about the technical assumptions in the proof of the characterization of equality above: \\ 

We have assumed the domain $D$ to be convex, and to have $C^{2}$ boundary whose curvature is strictly positive, so that it can be described by a $C^{2}$ support function $q(\theta)$.  However, by the reduction to the convex case, any domain realizing equality in the isoperimetric inequality must be convex.  Moreover, for any compact, convex set $D$ and $r >0$, the $r$-neighborhood $D_{r}$ of $D$ has $C^{1,1}$ boundary, which is therefore twice-differentiable almost everywhere.  If $D$ realizes equality in the isoperimetric inequality, then by (\ref{sfir}) each of its $r$-neighborhoods $D_{r}$ does as well, and by the convexity of $D_{r}$, the curvature of $\partial D_{r}$ is non-negative at all points where it is defined.  Thus, if one can show that a domain which realizes equality in the isoperimetric inequality, whose boundary is twice-differentiable almost everywhere, and has non-negative curvature at all points where its curvature is defined is a disk, one will have shown that $D_{r}$ is a disk for all $r > 0$, and thus that $D$ is a disk as well. \\  

The relationship between the regularity of the boundary of a domain and the regularity of its support function and the smoothness properties of $\partial D_{r}$ are both discussed in \cite{Sc}.  Osserman discusses the significance of the regularity assumed on the boundaries of domains in the isoperimetric inequality in Section 2 of \cite{Os}.  He notes that one can modify a smooth domain by adding ``wiggles" to its boundary, increasing its perimeter while leaving its area unchanged -- thus, ``one has the ironic situation that the more irregular the boundary, the stronger will be the isoperimetric inequality, but the harder it is to prove.  The fact is, the isoperimetric inequality holds in the greatest generality imaginable, but one needs suitable definitions even to state it."  


\end{document}